\newtheorem{theorem}{Theorem}
\newtheorem*{theorem nonum}{Theorem}
\newtheorem{proposition}[theorem]{Proposition}
\newtheorem{corollary}[theorem]{Corollary}
\theoremstyle{remark}
\newtheorem*{remark}{Remark}
\numberwithin{theorem}{section} \numberwithin{equation}{section}
\providecommand{\abs}[1]{\lvert#1\rvert}
\DeclareRobustCommand{\stirling}{\genfrac\{\}{0pt}{}}
\begin{document}

\hspace{0.2in}
\title{Tensor Powers of the Defining Representation of $S_n$}
\author{Shanshan Ding}
\email{dish@sas.upenn.edu}
\begin{abstract}We give a decomposition formula for tensor powers of the defining representation of $S_n$ and apply it to bound the mixing time of a Markov chain on $S_n$.\end{abstract}
\maketitle

\section{Introduction}

The defining, or permutation, representation of $S_n$ is the $n$-dimensional representation $\varrho$ where
\begin{equation}
  \ (\varrho(\sigma))_{i, j} = \begin{cases}
		1  &\text{$\sigma(j)=i$}\\
		0 			 &\text{otherwise}.
    \end{cases}
\end{equation}
Since the fixed points of $\sigma$ can be read off of the matrix diagonal, the character of $\varrho$ at $\sigma$, $\chi_{\varrho}(\sigma)$, is precisely the number of fixed points of $\sigma$.  The irreducible representations, or irreps for short, of $S_n$ are parametrized by the partitions of $n$, and $\varrho$ decomposes as $S^{(n-1, 1)} \oplus S^{(n)}$.  Note that $\chi_{S^{(n-1, 1)}}(\sigma)$ is one less than the number of fixed points of $\sigma$.  In the terminology of \cite{FH91}, we call the $(n-1)$-dimensional irrep $S^{(n-1, 1)}$ the standard representation of $S_n$.

A classic question in the representation theory of symmetric groups is how tensor products of representations decompose as direct sums of irreps.  In Section 2 we will present a neat formula for the decomposition of tensor powers of $\varrho$ and, as corollary, that of tensor powers of $S^{(n-1, 1)}$.

Our study of tensor powers of $\varrho$ arose from an investigation in the mixing time of the Markov chain on $S_n$ formed by applying a single uniformly chosen $n$-cycle to a deck of $n$ cards and following up with repeated random transpositions.  This chain is a natural counterpart to the random transposition walk on $S_n$, famously shown by Diaconis and Shahshahani in \cite{DS81} to mix in $O(n \ln n)$ steps, in the sense that random transpositions induce Markov chains on not just $S_n$, but the set of partitions of $n$: the time-homogeneous random transposition walk is one such chain that starts at the partition $(1^n)$, whereas the process we proposed is one that starts at the other extreme, $(n)$.  Along with following the classic approach of \cite{DS81}, we will use the tensor decomposition formula to show in Section 3 that the mixing time for the $n$-cycle-to-transpositions chain is $O(n)$.

\section{Decomposition Formula for Tensor Powers of $\varrho$}

Let $\lambda$ be a partition of $n$, and recall that the irreps of $S_n$, the $S^{\lambda}$'s, are indexed by the partitions of $n$.  As promised, we give a compact formula for the decomposition of tensor powers of $\varrho$ into irreps, i.e. the coefficients $a_{\lambda, r}$ in the expression 
\begin{equation}
\varrho^{\otimes r} = \underset{\lambda \vdash n}{\bigoplus} a_{\lambda, r} S^{\lambda} := \underset{\lambda \vdash n}{\bigoplus} (S^{\lambda})^{\oplus a_{\lambda, r}}.
\end{equation}

\begin{proposition}\label{PrIrreps}
Let $\lambda \vdash n$ and $1 \le r \le n-\lambda_2$.  The multiplicity of $S^{\lambda}$ in the irreducible representation decomposition of $\varrho^{\otimes r}$ is given by
\begin{equation}\label{EqIrreps}
a_{\lambda, r} = f^{\bar{\lambda}} \sum_{i=\abs{\bar{\lambda}}}^r \binom{i}{\abs{\bar{\lambda}}}\stirling{r}{i},
\end{equation}
where $\bar{\lambda}=(\lambda_2, \lambda_3, \ldots)$ with weight $\abs{\bar{\lambda}}$, $f^{\bar{\lambda}}$ is the number of standard Young tableaux of shape $\bar{\lambda}$, and $\stirling{r}{i}$ is a Stirling number of the second kind. 
\end{proposition}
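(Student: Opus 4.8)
The plan is to compute everything at the level of characters, using the fact (noted above) that $\chi_{\varrho}(\sigma)=\mathrm{fix}(\sigma)$ is the number of fixed points, so that $\chi_{\varrho^{\otimes r}}(\sigma)=\mathrm{fix}(\sigma)^{r}$. The first move is to linearize this power. By the classical expansion of monomials in falling factorials, $x^{r}=\sum_{i=0}^{r}\stirling{r}{i}\,x^{\underline{i}}$ with $x^{\underline{i}}=x(x-1)\cdots(x-i+1)$, so $\mathrm{fix}(\sigma)^{r}=\sum_{i=0}^{r}\stirling{r}{i}\,\mathrm{fix}(\sigma)^{\underline{i}}$. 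The point is that $\mathrm{fix}(\sigma)^{\underline{i}}$ is exactly the number of ordered $i$-tuples of distinct points fixed by $\sigma$, i.e.\ the value at $\sigma$ of the character of the permutation module $M_{i}:=\mathbb{C}[\,\text{injections }[i]\hookrightarrow[n]\,]\cong\mathrm{Ind}_{S_{n-i}}^{S_{n}}\mathbf{1}$. Hence $\varrho^{\otimes r}\cong\bigoplus_{i=0}^{r}\stirling{r}{i}\,M_{i}$ as $S_{n}$-modules, and $a_{\lambda,r}=\sum_{i=0}^{r}\stirling{r}{i}\,\langle\chi_{M_{i}},\chi_{S^{\lambda}}\rangle$, so the problem reduces to finding the multiplicity of $S^{\lambda}$ in each $M_{i}$.

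For that multiplicity, Frobenius reciprocity gives $\langle\chi_{M_{i}},\chi_{S^{\lambda}}\rangle=\dim\mathrm{Hom}_{S_{n-i}}(\mathbf{1},\mathrm{Res}^{S_{n}}_{S_{n-i}}S^{\lambda})$, the multiplicity of the trivial $S_{n-i}$-module in the restriction. Iterating the branching rule (equivalently, via Pieri's rule) identifies this with $f^{\lambda/(n-i)}$, the number of standard Young tableaux of the skew shape $\lambda/(n-i)$; in particular it vanishes unless $(n-i)\subseteq\lambda$, i.e.\ $\abs{\bar\lambda}\le i$, which accounts for the lower limit $i=\abs{\bar\lambda}$ in \eqref{EqIrreps}.

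The one place the hypothesis $r\le n-\lambda_{2}$ is used is in evaluating $f^{\lambda/(n-i)}$. Since $\lambda_{1}=n-\abs{\bar\lambda}$, deleting a first row of length $n-i$ from $\lambda$ leaves $i-\abs{\bar\lambda}$ cells in row $1$, occupying columns $n-i+1,\dots,n-\abs{\bar\lambda}$, sitting above the straight shape $\bar\lambda=(\lambda_{2},\lambda_{3},\dots)$. Because $i\le r\le n-\lambda_{2}$ forces $n-i\ge\lambda_{2}=\bar\lambda_{1}$, those leftover cells lie strictly to the right of every column used by $\bar\lambda$, so $\lambda/(n-i)$ is a column-disjoint union of a single row of length $i-\abs{\bar\lambda}$ and the diagram $\bar\lambda$. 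The number of standard tableaux of such a disconnected shape is gotten by choosing which $i-\abs{\bar\lambda}$ of the labels $1,\dots,i$ fill the row and filling the two pieces independently, giving $f^{\lambda/(n-i)}=\binom{i}{\,i-\abs{\bar\lambda}\,}f^{\bar\lambda}=\binom{i}{\abs{\bar\lambda}}f^{\bar\lambda}$. Substituting back, $a_{\lambda,r}=\sum_{i=0}^{r}\stirling{r}{i}\binom{i}{\abs{\bar\lambda}}f^{\bar\lambda}=f^{\bar\lambda}\sum_{i=\abs{\bar\lambda}}^{r}\binom{i}{\abs{\bar\lambda}}\stirling{r}{i}$, which is \eqref{EqIrreps}.

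I expect the only real content to be that third step: one must check that $r\le n-\lambda_{2}$ is precisely the inequality disconnecting the skew diagram $\lambda/(n-i)$ for every relevant $i$, and that without it the leftover first-row cells can share columns with $\bar\lambda$ and the clean binomial count fails (as one already sees for $\lambda=(2,2)$, $r=3$). The remaining ingredients — linearization by Stirling numbers, the model of $M_{i}$ as injections, Frobenius reciprocity, iterated branching, and the multinomial count of standard tableaux on a disconnected shape — are all standard, so the write-up should be short.
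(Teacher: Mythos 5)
Your proof is correct, and it takes a genuinely different route from the paper. The paper starts from the Goupil--Chauve exponential generating function $\sum_{r} a_{\lambda,r}x^r/r! = \frac{f^{\bar\lambda}}{\abs{\bar\lambda}!}e^{e^x-1}(e^x-1)^{\abs{\bar\lambda}}$ and extracts coefficients by convolving the Bell and Stirling generating functions, finishing with the convolution identity $\sum_{t}\stirling{t}{q}\binom{r}{t}\stirling{r-t}{\abs{\bar\lambda}}=\binom{q+\abs{\bar\lambda}}{\abs{\bar\lambda}}\stirling{r}{q+\abs{\bar\lambda}}$ from \cite{AS65}; the constraint $r\le n-\lambda_2$ is inherited silently from the stable range of the cited generating function. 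Your argument is self-contained: writing $\mathrm{fix}(\sigma)^r=\sum_i\stirling{r}{i}\mathrm{fix}(\sigma)^{\underline{i}}$ identifies $\varrho^{\otimes r}$ with $\bigoplus_i\stirling{r}{i}M_i$ where $M_i\cong\mathrm{Ind}_{S_{n-i}}^{S_n}\mathbf{1}$, Frobenius reciprocity plus iterated branching gives $\langle\chi_{M_i},\chi_{S^\lambda}\rangle=f^{\lambda/(n-i)}$, and the hypothesis $r\le n-\lambda_2$ is exactly what disconnects the skew shape $\lambda/(n-i)$ into a row of $i-\abs{\bar\lambda}$ cells and a copy of $\bar\lambda$ sharing no row or column, whence $f^{\lambda/(n-i)}=\binom{i}{\abs{\bar\lambda}}f^{\bar\lambda}$. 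All the ingredients you invoke are standard and the steps check out (in the write-up, note that the multinomial count of standard tableaux of a disconnected skew shape needs row-disjointness as well as column-disjointness, though here row-disjointness is automatic since the leftover cells sit in row $1$ and $\bar\lambda$ occupies rows $2$ and below). What your approach buys is a conceptual explanation of where the bound $r\le n-\lambda_2$ comes from --- your $\lambda=(2,2)$, $r=3$ example correctly shows the binomial count failing when the pieces share a column --- and independence from \cite{GC06}; what the paper's approach buys is that the same generating-function manipulation immediately yields Corollary \ref{CoIrreps} for the standard representation by multiplying through by $e^{-x}$.
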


\begin{proof}
Goupil and Chauve derived in \cite{GC06} the generating function
\begin{equation}\label{EqTensorGF}
\sum_{r \ge \abs{\bar{\lambda}}} a_{\lambda, r} \frac{x^r}{r!} = \frac{f^{\bar{\lambda}}}{\abs{\bar{\lambda}}!}e^{e^x-1}(e^x-1)^{\abs{\bar{\lambda}}}.
\end{equation}
By (24b) and (24f) in Chapter 1 of \cite{Sta97},
\begin{equation}\label{EqStanley}
\sum_{s \ge j} \stirling{s}{j} \frac{x^s}{s!} = \frac{(e^x-1)^j}{j!}
\end{equation} 
and
\begin{equation}
\sum_{t \ge 0} B_t \frac{x^t}{t!} = e^{e^x-1},
\end{equation}
where $B_0:=1$ and $B_t = \sum_{q=1}^t \stirling{t}{q}$ is the $t$-th Bell number, so we obtain from (\ref{EqTensorGF}) that
\begin{equation}
\frac{a_{\lambda, r}}{r!}= f^{\bar{\lambda}}\sum_{s+t=r} \frac{B_t}{s!t!}\stirling{s}{\abs{\bar{\lambda}}},
\end{equation}
and thus
\begin{equation}
\begin{split}
\frac{a_{\lambda, r}}{f^{\bar{\lambda}}} &= \sum_{t=0}^{r-\abs{\bar{\lambda}}} B_t\binom{r}{t}\stirling{r-t}{\abs{\bar{\lambda}}} \\ &= \stirling{r}{\abs{\bar{\lambda}}} + \sum_{t=1}^{r-\abs{\bar{\lambda}}} \sum_{q=1}^t \stirling{t}{q}\binom{r}{t}\stirling{r-t}{\abs{\bar{\lambda}}} \\ &= \stirling{r}{\abs{\bar{\lambda}}} + \sum_{q=1}^{r-\abs{\bar{\lambda}}} \sum_{t=q}^{r-\abs{\bar{\lambda}}} \stirling{t}{q}\binom{r}{t}\stirling{r-t}{\abs{\bar{\lambda}}}.
\end{split}
\end{equation}
By (24.1.3, II.A) of \cite{AS65},
\begin{equation}
\sum_{t=q}^{r-\abs{\bar{\lambda}}} \stirling{t}{q}\binom{r}{t}\stirling{r-t}{\abs{\bar{\lambda}}} = \binom{q+\abs{\bar{\lambda}}}{\abs{\bar{\lambda}}}\stirling{r}{q+\abs{\bar{\lambda}}},
\end{equation} 
so that
\begin{equation}
\begin{split}
\frac{a_{\lambda, r}}{f^{\bar{\lambda}}} &= \stirling{r}{\abs{\bar{\lambda}}} + \sum_{q=1}^{r-\abs{\bar{\lambda}}} \binom{q+\abs{\bar{\lambda}}}{\abs{\bar{\lambda}}}\stirling{r}{q+\abs{\bar{\lambda}}} \\ &= \stirling{r}{\abs{\bar{\lambda}}} + \sum_{i=\abs{\bar{\lambda}}+1}^{r} \binom{i}{\abs{\bar{\lambda}}}\stirling{r}{i} = \sum_{i=\abs{\bar{\lambda}}}^{r} \binom{i}{\abs{\bar{\lambda}}}\stirling{r}{i},
\end{split}
\end{equation}
as was to be shown.
\end{proof}

Now, let $b_{\lambda, r}$ be the multiplicities such that
\begin{equation}
(S^{(n-1, 1)})^{\otimes r} = \underset{\lambda \vdash n}{\bigoplus} b_{\lambda, r} S^{\lambda}.
\end{equation}
Goupil and Chauve also derived the generating function 
\begin{equation}\label{EqTensorGF2}
\sum_{r \ge \abs{\bar{\lambda}}} b_{\lambda, r} \frac{x^r}{r!} = \frac{f^{\bar{\lambda}}}{\abs{\bar{\lambda}}!}e^{e^x-x-1}(e^x-1)^{\abs{\bar{\lambda}}}, 
\end{equation}
so from Proposition \ref{PrIrreps} we can obtain a formula for the decomposition of $(S^{(n-1, 1)})^{\otimes r}$ as well.
\begin{corollary}\label{CoIrreps}
Let $\lambda \vdash n$ and $1 \le r \le n-\lambda_2$.  The multiplicity of $S^{\lambda}$ in the irreducible representation decomposition of $(S^{(n-1, 1)})^{\otimes r}$ is given by
\begin{equation}
b_{\lambda, r} = f^{\bar{\lambda}}\sum_{s=\abs{\bar{\lambda}}}^r (-1)^{r-s} \binom{r}{s}\left( \sum_{i=\abs{\bar{\lambda}}}^s \binom{i}{\abs{\bar{\lambda}}}\stirling{s}{i}\right).
\end{equation}
\end{corollary}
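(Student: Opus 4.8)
The plan is to read the identity off directly from the two generating functions (\ref{EqTensorGF}) and (\ref{EqTensorGF2}), which differ only by a factor of $e^{-x}$. Since $e^{e^x-x-1}=e^{-x}\,e^{e^x-1}$, the exponential generating functions $A_\lambda(x):=\sum_{r}a_{\lambda,r}x^r/r!$ and $B_\lambda(x):=\sum_{r}b_{\lambda,r}x^r/r!$ satisfy $B_\lambda(x)=e^{-x}A_\lambda(x)$. Taking the coefficient of $x^r/r!$ on both sides and using the binomial convolution $[x^r/r!]\bigl(e^{-x}A_\lambda(x)\bigr)=\sum_{s=0}^{r}\binom{r}{s}(-1)^{r-s}a_{\lambda,s}$ gives
\begin{equation*}
b_{\lambda,r}=\sum_{s=\abs{\bar\lambda}}^{r}(-1)^{r-s}\binom{r}{s}\,a_{\lambda,s},
\end{equation*}
the lower limit having been raised to $\abs{\bar\lambda}$ because $a_{\lambda,s}=0$ for $s<\abs{\bar\lambda}$.

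I would then substitute the closed form of Proposition \ref{PrIrreps} for each $a_{\lambda,s}$ on the right. The index satisfies $s\le r\le n-\lambda_2$, so each such $s$ lies in the range $1\le s\le n-\lambda_2$ in which (\ref{EqIrreps}) holds — with the single exception of $s=0$, which appears only when $\abs{\bar\lambda}=0$, i.e.\ $\lambda=(n)$; there one checks directly that $a_{(n),0}=1$ coincides with the value $\binom{0}{0}\stirling{0}{0}=1$ given by (\ref{EqIrreps}), so the formula is valid throughout the sum in every case. Plugging in $a_{\lambda,s}=f^{\bar\lambda}\sum_{i=\abs{\bar\lambda}}^{s}\binom{i}{\abs{\bar\lambda}}\stirling{s}{i}$ and factoring out $f^{\bar\lambda}$ then produces exactly the asserted expression for $b_{\lambda,r}$.

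I do not anticipate any genuine obstacle: the content is the coefficient extraction together with some bookkeeping of summation ranges. If one wishes to bypass generating functions, the same relation follows representation-theoretically from $\varrho\cong S^{(n-1,1)}\oplus S^{(n)}$ with $S^{(n)}$ trivial: expanding the tensor power gives $\varrho^{\otimes r}\cong\bigoplus_{s=0}^{r}\binom{r}{s}(S^{(n-1,1)})^{\otimes s}$, hence $a_{\lambda,r}=\sum_{s=0}^{r}\binom{r}{s}b_{\lambda,s}$, and binomial inversion yields the displayed identity; Proposition \ref{PrIrreps} then finishes the proof as before.
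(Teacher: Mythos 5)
Your proposal is correct and follows essentially the same route as the paper: comparing the two Goupil--Chauve generating functions to get $B_\lambda(x)=e^{-x}A_\lambda(x)$, extracting the coefficient of $x^r/r!$ as a signed binomial convolution, and substituting the formula of Proposition \ref{PrIrreps}. Your extra care with the summation range and the $s=0$ case, and the representation-theoretic aside via $\varrho\cong S^{(n-1,1)}\oplus S^{(n)}$ with binomial inversion, are both fine but not needed beyond what the paper already does.
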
  
\begin{proof}
Comparing (\ref{EqTensorGF2}) with (\ref{EqTensorGF}) gives
\begin{equation}
\sum_{r \ge \abs{\bar{\lambda}}} b_{\lambda, r} \frac{x^r}{r!} = \left(\sum_{s \ge \abs{\bar{\lambda}}} a_{\lambda, s} \frac{x^s}{s!}\right) e^{-x} = \left(\sum_{s \ge \abs{\bar{\lambda}}} a_{\lambda, s} \frac{x^s}{s!}\right) \left(\sum_{t \ge 0}\frac{(-x)^t}{t!}\right),
\end{equation}
so that 
\begin{equation}
\frac{b_{\lambda, r}}{r!}= \sum_{s+t=r} \frac{(-1)^t a_{\lambda, s}}{s!t!} = \sum_{s=\abs{\bar{\lambda}}}^r \frac{(-1)^{r-s} }{s!(r-s)!}\left(f^{\bar{\lambda}} \sum_{i=\abs{\bar{\lambda}}}^s \binom{i}{\abs{\bar{\lambda}}}\stirling{s}{i}\right),
\end{equation}
and the result follows.
\end{proof}
\begin{remark}
Corollary \ref{CoIrreps} is very similar to Proposition 2 of \cite{GC06}, but our result is cleaner, as it does not involve associated Stirling numbers of the second kind.  For another approach to the decomposition of tensor powers of $\varrho$, see \cite{Ful10}.
\end{remark}

\section{Connection to Markov Chain Mixing Time}

Consider the Markov chain on $S_n$ formed by first applying a random $n$-cycle to a deck of $n$ cards and then following with repeated random transpositions.  Formally, form a Markov chain $\{X_k\}$ on the symmetric group $S_n$ as follows: let $X_0$ be the identity, set $X_1=\pi X_0$, where $\pi$ is a uniformly selected $n$-cycle, and for $k \ge 2$ set $X_k = \tau_k X_{k-1}$, where $\tau_k$ is a uniformly selected transposition.  Observe that $X_k \in A_n$ when $n$ and $k$ are of the same parity.  Otherwise, $X_k \in S_n \backslash A_n$.  Let $\mu_k$ be the law of $X_k$, and let $U_k$ be the uniform measure on $A_n$ if $X_k \in A_n$ and the uniform measure on $S_n \backslash A_n$ if $X_k \in S_n \backslash A_n$.  What is the total variation distance between $\mu_k$ and $U_k$? 

The goal of this section is to prove the following:

\begin{theorem}\label{ThMain}
For any $c>0$, after one $n$-cycle and $cn$ transpositions, 
\begin{equation}
\frac{e^{-2c}}{e} - o(1) \le \|\mu_{cn+1} - U_{cn+1}\|_{\emph{TV}} \le \frac{e^{-2c}}{2\sqrt{1-e^{-4c}}} + o(1)
\end{equation}
as $n$ goes to infinity.
\end{theorem}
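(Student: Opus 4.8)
The plan is to follow the classical Diaconis--Shahshahani upper bound lemma together with an explicit lower bound via a test function, using the tensor decomposition of Proposition~\ref{PrIrreps} to control the relevant Fourier coefficients. Recall that for a measure $\mu_k$ supported on a coset of $A_n$ and the corresponding uniform measure $U_k$, the squared total variation distance is bounded by $4\|\mu_{cn+1}-U_{cn+1}\|_{\mathrm{TV}}^2 \le \sum_{\lambda} d_\lambda \,\|\widehat{\mu}_{cn+1}(\lambda)\|_{\mathrm{HS}}^2$ where the sum ranges over the nontrivial irreps $S^\lambda$ (and, because of the parity issue, the sign representation is excluded/folded in). Since the step distribution after the initial $n$-cycle is a conjugacy-class measure (random transpositions), each $\widehat{\mu}_{cn+1}(\lambda)$ is a scalar multiple of the identity: the transposition part contributes $\big(\chi_\lambda(\tau)/d_\lambda\big)^{cn} = r_\lambda^{cn}$ where $r_\lambda$ is the eigenvalue of the random-transposition operator on $S^\lambda$, and the $n$-cycle part contributes $\chi_\lambda(\pi)/d_\lambda$ for a fixed $n$-cycle $\pi$. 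So the whole problem reduces to understanding $\sum_{\lambda \neq (n)} d_\lambda^2 \,r_\lambda^{2cn}\,\big(\chi_\lambda(\pi)/d_\lambda\big)^2 = \sum_{\lambda} r_\lambda^{2cn}\,\chi_\lambda(\pi)^2$.

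First I would assemble the standard ingredients: the eigenvalue $r_\lambda = \frac{1}{\binom{n}{2}}\sum_{j}\big(\binom{\lambda_j}{2} - \binom{\lambda_j'}{2}\big)$ from Diaconis--Shahshahani, and the fact that $\chi_\lambda(\pi) \in \{-1,0,1\}$ for an $n$-cycle $\pi$, being nonzero exactly on hook shapes $\lambda = (n-j, 1^j)$, where $\chi_{(n-j,1^j)}(\pi) = (-1)^j$. This immediately collapses the Fourier sum to a sum over hooks: $4\|\mu_{cn+1}-U_{cn+1}\|_{\mathrm{TV}}^2 \le \sum_{j=1}^{n-2} r_{(n-j,1^j)}^{2cn}$ (dropping $j=0$ which is trivial and $j=n-1$ which is the sign rep handled by the parity convention). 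For a hook $(n-j,1^j)$ one computes $r_{(n-j,1^j)} = \frac{\binom{n-j}{2} - \binom{j+1}{2}}{\binom{n}{2}} = 1 - \frac{j(2n-j-1)}{n(n-1)} = 1 - \frac{2j}{n} + O(j^2/n^2)$, so $r^{2cn} \approx e^{-4cj}$ for $j = o(n^{2/3})$, and one checks the large-$j$ terms are negligible. Summing the geometric-like series $\sum_{j\ge 1} e^{-4cj} = \frac{e^{-4c}}{1-e^{-4c}}$ and taking the square root of one quarter of it gives precisely the claimed upper bound $\frac{e^{-2c}}{2\sqrt{1-e^{-4c}}} + o(1)$; this is where I would use Proposition~\ref{PrIrreps} implicitly --- it is the clean bookkeeping of which multiplicities appear --- though for the hook-sum the explicit character values suffice, and the proposition is more essential for controlling the structure of the walk as a chain on partitions.

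For the lower bound I would use the number of fixed points as a test statistic, exactly as in \cite{DS81}: let $W(\sigma)$ be the number of fixed points, so $W = \chi_\varrho = 1 + \chi_{S^{(n-1,1)}}$. Under the uniform measure $W$ is asymptotically Poisson$(1)$ with mean and variance $1$; under $\mu_{cn+1}$, the mean is $E_{\mu}[W] = 1 + r_{(n-1,1)}^{cn}\,\chi_{(n-1,1)}(\pi) = 1 + r_{(n-1,1)}^{cn}\cdot(-1) \to 1 - e^{-2c}$ (using $\chi_{(n-1,1)}(\pi) = -1$ for an $n$-cycle, since a single $n$-cycle has no fixed points, $W(\pi)=0$), so the shift in mean is $\approx e^{-2c}$, of the same order as the standard deviation. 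I would then compute $\mathrm{Var}_\mu[W]$ --- this requires $E_\mu[W^2]$, which decomposes via $\varrho^{\otimes 2} = \varrho \oplus (\text{rest})$ or directly via $W^2 - W$ counting ordered pairs of fixed points, giving a second-moment formula in terms of $r_{(n-1,1)}^{cn}$ and $r_{(n-2,1,1)}^{cn}, r_{(n-2,2)}^{cn}$ --- and show it stays bounded. A Chebyshev/second-moment argument on the event $\{W \le t\}$ for a suitable threshold $t$ then separates $\mu_{cn+1}$ from $U_{cn+1}$ by at least $\frac{e^{-2c}}{e} - o(1)$; tuning the constant to get exactly $e^{-2c}/e$ comes from comparing a Poisson$(1)$ tail with the shifted distribution. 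The main obstacle I anticipate is the lower bound's constant: getting the sharp $e^{-2c}/e$ rather than a weaker $\Omega(e^{-2c})$ requires choosing the test event carefully (likely $\{W = 0\}$, whose uniform probability is $\to e^{-1}$ and whose $\mu$-probability one bounds below using the downward mean shift plus a one-sided second-moment estimate), and verifying that the error terms from the non-principal hook eigenvalues and from the Poisson approximation are genuinely $o(1)$ uniformly in the relevant range.
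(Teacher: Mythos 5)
Your upper bound is essentially the paper's argument: the coset version of the Diaconis--Shahshahani upper bound lemma, the Murnaghan--Nakayama collapse of $\chi_\lambda(\pi)$ onto hooks, the eigenvalue $\frac{n-1-2j}{n-1}$ for $(n-j,1^j)$, and the geometric series $\sum_j e^{-4cj}$. One bookkeeping slip: the terms with $j$ near $n-1$ are \emph{not} negligible --- under conjugation $(n-j,1^j)\mapsto(j+1,1^{n-1-j})$ the eigenvalue only changes sign, so the large-$j$ terms contribute a second copy of the geometric series --- but this factor of $2$ is exactly cancelled by the factor $\tfrac12$ you dropped from the coset form of the upper bound lemma, so your final constant is right for the wrong reason. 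Fix both and you recover the paper's Proposition \ref{ThUBn}.

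The genuine gap is in the lower bound. A Chebyshev/second-moment argument on $W=\chi_\varrho$ cannot deliver the constant $e^{-2c}/e$: knowing only $E_{\mu}[W]\to 1-e^{-2c}$ and a bounded variance does not pin down $\mu(\{W=0\})$, and the available one-sided moment inequalities point the wrong way (Markov gives $\mu(W=0)\ge 1-E_\mu[W]\to e^{-2c}$, which beats $U(W=0)\to e^{-1}$ only for $c<1/2$; Paley--Zygmund bounds $\mu(W>0)$ from below, i.e.\ $\mu(W=0)$ from \emph{above}). You flag this as the main obstacle but do not resolve it. The paper's resolution is exactly where Proposition \ref{PrIrreps} earns its keep: since the initial $n$-cycle annihilates $\widehat{\mu_{cn+1}}(S^\lambda)$ for every non-hook $\lambda$, and $f^{\bar\lambda}=1$ for hooks, the $r$-th moment $E_{\mu_{cn+1}}[W^r]=\sum_\lambda a_{\lambda,r}\,\mathrm{tr}(\widehat{\mu_{cn+1}}(S^\lambda))$ collapses via the binomial theorem to $\sum_{i=1}^r \stirling{r}{i}(1-e^{-2c})^i$ for every $r\le n-2$, i.e.\ to the moments of $\mathcal{P}(1-e^{-2c})$. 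The method of moments then gives convergence in distribution, whence $\mu_{cn+1}(\{W=0\})\to e^{e^{-2c}-1}$, and $e^{e^{-2c}-1}-e^{-1}=e^{-1}\bigl(e^{e^{-2c}}-1\bigr)\ge e^{-2c}/e$ by expanding the exponential. In short, the tensor-power decomposition is not ``implicit bookkeeping'' for the upper bound; it is the engine of the lower bound, and without it (or all higher moments from some other source) your plan yields at best a weaker bound of order $e^{-2c}$ with an unspecified constant, and possibly nothing at all for large $c$.
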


The upper bound follows from the approach of \cite{DS81}.  For the (lazy) random transposition shuffle on $n$ cards, the time-homogeneous chain on $S_n$ with increment measure $\upsilon$ that assigns mass $ \frac{1}{n}$ to the identity and $\frac{2}{n^2}$ to each of the $\frac{n(n-1)}{2}$ transpositions $\tau$, Diaconis and Shahshahani derived the bound
\begin{equation}\label{EqSpectral}
4\|\mu_k-U \|_{\text{TV}}^2 \le \sum_{\substack{\rho \in \widehat{S_n}\\ \rho \neq \rho_{\text{triv}}}} d_{\rho}^2 \left(\frac{1}{n} + \frac{(n-1)\chi_{\rho}(\tau)}{nd_{\rho}}\right)^{2k},
\end{equation}
where $U$ is the uniform measure on $S_n$, $\widehat{S_n}$ is the set of irreps of $S_n$, and $d_{\rho}$ and $\chi_{\rho}(\tau)$ denote the dimension and the character at $\tau$ of the representation $\rho$, respectively.  Careful computations of the terms on the RHS of (\ref{EqSpectral}) gave a mixing time of $O(n \ln n)$, and explicit constants were later calculated by Saloff-Coste and Z\'u\~niga in \cite{S-CZ08}.

Inequality (\ref{EqSpectral}) comes from the theory of non-commutative Fourier analysis on $S_n$.  It carries the following routine extension (carefully spelled out in Chapter $2$ of \cite{Din14}) to the $n$-cycle-to-transpositions chain:

\begin{equation}\label{EqFinalUB}
4\|\mu_{k+1} - U_{k+1}\|_{\text{TV}}^2 \le \frac{1}{2} \sum_{\substack{\rho \in \widehat{S_n}\\ \rho \neq \rho_{\text{triv}}, \rho_{\text{sign}}}} d_{\rho}^2\left(\frac{\chi_{\rho}(\tau)}{d_{\rho}}\right)^{2k}\left(\frac{\chi_{\rho}(\pi)}{d_{\rho}}\right)^2 .
\end{equation}

\begin{proposition}\label{ThUBn}
For any $c>0$, after one $n$-cycle and $cn$ transpositions, 
\begin{equation}
4\|\mu_{cn+1} - U_{cn+1}\|_{\emph{TV}}^2 \le \frac{e^{-4c}}{1-e^{-4c}} + o(1)
\end{equation}
as $n$ goes to infinity.
\end{proposition}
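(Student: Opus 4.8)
The plan is to estimate the right-hand side of (\ref{EqFinalUB}) by organizing the irreps according to the size of $\bar\lambda$, the partition obtained by deleting the first row of $\lambda$. The key point is that the decomposition formula (\ref{EqIrreps}) of Proposition \ref{PrIrreps}, together with the identity $\chi_{S^{(n-1,1)}}(\sigma) = \chi_\varrho(\sigma) - 1$, gives a handle on the quantities $\chi_\rho(\pi)/d_\rho$ and $\chi_\rho(\tau)/d_\rho$ appearing in the bound. For the $n$-cycle $\pi$, the character $\chi_\varrho(\pi)$ vanishes (an $n$-cycle, $n \ge 2$, has no fixed points), so $\chi_{S^{(n-1,1)}}(\pi) = -1$ and more generally $\chi_{S^\lambda}(\pi) \in \{0, \pm 1\}$ since $S^\lambda$ appears in $\varrho^{\otimes r}$ only when it is a constituent of $S^{(n)} \oplus S^{(n-1,1)}$ in the relevant range — in fact for an $n$-cycle the only nonzero characters occur at hook shapes, with value $\pm 1$. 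So $(\chi_\rho(\pi)/d_\rho)^2 = 1/d_\rho^2$ for hooks and $0$ otherwise, and the sum collapses to a sum over nontrivial, non-sign hook representations of $(\chi_\rho(\tau)/d_\rho)^{2k}$.

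Next I would record the eigenvalue ratios at a transposition. For the hook $\lambda = (n-j, 1^j)$ with $1 \le j \le n-2$, the standard formula for $\chi_\rho(\tau)/d_\rho$ — equivalently the content-sum formula $\chi_\rho(\tau)/d_\rho = \tfrac{1}{\binom n2}\sum_{\text{cells}} c(\text{cell})$ — gives $\chi_\rho(\tau)/d_\rho = 1 - \tfrac{2j}{n-1}$ (and dimension $d_\rho = \binom{n-1}{j}$). So with $k = cn$ the dominant contributions come from small $j$ and from $j$ near $n-2$ (by the symmetry $j \leftrightarrow n-1-j$, which swaps $\lambda$ with $\lambda'$ and corresponds to tensoring with the sign representation); pairing these, the sum over $j = 1, \dots, n-2$ of $d_\rho^2 (\chi_\rho(\tau)/d_\rho)^{2k} (\chi_\rho(\pi)/d_\rho)^2 = (\chi_\rho(\tau)/d_\rho)^{2k}$ is $2\sum_{j=1}^{n-2}\bigl(1 - \tfrac{2j}{n-1}\bigr)^{2cn}$ up to the overall factor $\tfrac12$ in (\ref{EqFinalUB}), i.e. $\sum_{j=1}^{n-2}(1-\tfrac{2j}{n-1})^{2cn}$.

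Then the main analytic step is the limit. For fixed $j$, $(1 - \tfrac{2j}{n-1})^{2cn} \to e^{-4cj}$ as $n \to \infty$, and a routine domination argument (the terms decay geometrically in $j$ uniformly in $n$, after splitting off the symmetric contribution near $j = n-1$) lets me interchange limit and sum to get $\sum_{j \ge 1} e^{-4cj} = \tfrac{e^{-4c}}{1 - e^{-4c}}$. Combining this with the factor $\tfrac12$ in (\ref{EqFinalUB}) and the factor $2$ from the $j \leftrightarrow n-1-j$ symmetry, the net bound is $4\|\mu_{cn+1} - U_{cn+1}\|_{\mathrm{TV}}^2 \le \tfrac{e^{-4c}}{1-e^{-4c}} + o(1)$, as claimed.

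I expect the main obstacle to be the bookkeeping that justifies restricting to hook shapes and that controls the tail of the sum uniformly in $n$: one must verify carefully that $\chi_{S^\lambda}(\pi) = 0$ whenever $\lambda$ is not a hook within the range where $a_{\lambda,r}$ could be nonzero — this is where one genuinely uses that $\pi$ is an $n$-cycle together with the Murnaghan–Nakayama rule (a single $n$-rim hook must be removable, forcing a hook shape) — and that the surviving terms are dominated by a summable geometric series valid for all large $n$, so that dominated convergence applies. The representation-theoretic input is standard; the care lies in the uniform tail estimate and in handling the boundary term $j$ near $n-1$ via the sign-twist symmetry, since those terms are individually close to $1$ but are excluded or paired correctly.
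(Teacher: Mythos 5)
Your proposal is correct and follows essentially the same route as the paper: restrict to hook shapes via the Murnaghan--Nakayama rule (since $\chi^{\lambda}_{(n)}=\pm1$ on hooks and $0$ otherwise), use the normalized transposition character $\frac{n-1-2j}{n-1}$ for $(n-j,1^j)$, pass to the limit $e^{-4cj}$, pair $j\leftrightarrow n-1-j$ by the conjugate/sign-twist symmetry, and sum the geometric series. The only blemishes are cosmetic: the brief detour about $S^{\lambda}$ being a constituent of $S^{(n)}\oplus S^{(n-1,1)}$ is unnecessary (Murnaghan--Nakayama alone does the job), and the index range in ``$2\sum_{j=1}^{n-2}$'' is off, though your final bookkeeping of the factors $\tfrac12$ and $2$ is right; your explicit attention to the uniform tail estimate is actually more careful than the paper's ``$\sim$''.
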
 

\begin{proof}Let $\chi^{\lambda}_{\gamma}$ denote the character of $S^{\lambda}$ on the cycle type ${\gamma}$.  The first and most critical step of the proof is the observation that, discounting $(n)$ and $(1^n)$, $\chi^{\lambda}_{(n)} = 0$ for all $\lambda$ except the hook-shaped ones, for which $\lambda_2=1$. This is an almost trivial consequence of the Murnaghan-Nakayama rule, as it is impossible to remove a rim hook of size $n$ from a Young diagram of size $n$ unless the Young diagram itself is the rim hook.  Moreover, for a hook-shaped $\lambda$, it is clear that $\chi^{\lambda}_{(n)}$ is equal to $1$ if $\lambda$ has an odd number of rows and $-1$ if $\lambda$ has an even number of rows.  Thus we arrive at a significant simplication of (\ref{EqFinalUB}), namely that
\begin{equation}
4\|\mu_{k+1} - U_{k+1}\|_{\text{TV}}^2 \le \frac{1}{2} \sum_{\lambda \in \Lambda_n} \left(\frac{\chi^{\lambda}_{(2, 1^{n-2})}}{\dim S^{\lambda}}\right)^{2k},
\end{equation}
where 
\begin{equation}
\Lambda_n = \{\lambda \vdash n: \lambda_1 > 1 \text{ and }\lambda_2 = 1\}.
\end{equation}

The normalized characters $\frac{\chi^{\lambda}_{(2, 1^{n-2})}}{\dim S^{\lambda}}$ have a simple description when $\lambda \in \Lambda_n$: let $j$ be one less than the number of rows of $\lambda$, then for $1 \le j \le \left\lfloor \frac{n-1}{2} \right\rfloor$, 
\begin{equation}\label{PrUBnNC}
\frac{\chi^{(n-j, 1^j)}_{(2, 1^{n-2})}}{\dim S^{(n-j, 1^j)}} = \frac{n-1-2j}{n-1}.
\end{equation}
This is a special case of the identity
\begin{equation}
\frac{\chi^{\lambda}_{(2, 1^{n-2})}}{\dim S^{\lambda}} = \frac{\sum_i (\lambda_i^2-(2i-1)\lambda_i)}{n(n-1)}, 
\end{equation} 
known as early as to Frobenius in \cite{Fro00}.

Fix any $c>0$.  By calculus, for $n-1-2j > 0$,  
\begin{equation}\label{EqUBnLh}
\lim_{n\rightarrow \infty} \left(\frac{n-1-2j}{n-1}\right)^{2cn} = e^{-4cj}.
\end{equation}
Thus (\ref{PrUBnNC}) and the fact that $\chi^{\lambda}_{\gamma} = \pm \chi^{\lambda'}_{\gamma}$, where $\lambda'$ is the conjugate partition of $\lambda$ (see p. 25 of \cite{Jam78}), imply that
\begin{equation}
\sum_{\lambda \in \Lambda_n} \left(\frac{\chi^{\lambda}_{(2, 1^{n-2})}}{\dim S^{\lambda}}\right)^{2cn} \sim \begin{cases}
		2\sum\limits_{j=1}^{(n-2)/2} e^{-4cj} &\hspace{0.15in}\text{$n$ is even}\\
		2\sum\limits_{j=1}^{(n-3)/2} e^{-4cj} &\hspace{0.15in}\text{$n$ is odd.}
    \end{cases}
\end{equation}
Summing the geometric series gives
\begin{equation}
4\|\mu_{cn+1} - U_{cn+1}\|_{\text{TV}}^2 \le \frac{1}{2} \sum_{\lambda \in \Lambda_n} \left(\frac{\chi^{\lambda}_{(2, 1^{n-2})}}{\dim S^{\lambda}}\right)^{2cn} \sim \frac{e^{-4c}}{1-e^{-4c}},
\end{equation}
as was to be shown.
\end{proof}

For measures $\mu$ and $\nu$ on a set $G$, a classic approach to finding a lower bound for $\|\mu-\nu\|_{\text{TV}}$ is to identify a subset $A$ of $G$ where $\abs{\mu(A)-\nu(A)}$ is close to maximal.  In many mixing problems involving the symmetric group, it is convenient to make $A$ either the set of fixed-point-free permutations or its complement, since it is well-known that the distribution of the number of fixed points with respect to the uniform measure on $S_n$ is asymptotically $\mathcal{P}(1)$, the Poisson distribution of mean one.  The same is true for the distribution of fixed points with respect to the uniform measure on either $A_n$ or $S_n \backslash A_n$.  See Theorem $4.3.3$ of \cite{Din14} for a proof. 

For Diaconis and Shahshahani's random transposition shuffle, $A$ is the set of permutations with one or more fixed points, and finding $\mu_k(A)$ boils down to a coupon collector's problem.  Let $B$ be the event that, after $k$ transpositions, at least one card is untouched.  It is not difficult to see that $\mu_k(A) \ge \mathbf{P}(B)$, where $\mathbf{P}(B)$ is equal to the probability that at least one of $n$ coupons is still missing after $2k$ trials.  The coupon collector's problem is well-studied, so this immediately gives a lower bound for $\mu_k(A)$, which in turn produces a lower bound for $\|\mu_k(A)-U(A)\|_{\text{TV}}$.  

The above argument is so short and simple that it was tagged onto the end of the introduction of \cite{DS81}, as if an afterthought.  Unfortunately, it is inapplicable to our problem, since the initial $n$-cycle obliterates the core of the argument.  Instead, we will fully characterize the distribution of $\chi_{\varrho}$ with respect to $\mu_{k+1}$ by deriving all moments of $\chi_{\varrho}$ with respect to $\mu_{k+1}$.  Let $E_{\mu}$ denote expectation with respect to $\mu$, then as observed in Chapter 3D of \cite{Dia88}, 
\begin{equation}\label{EqDia}
E_{\mu}(\chi_{\rho})=\sum_{\sigma\in S_n}\mu(\sigma)\text{tr}(\rho(\sigma)) = \text{tr}\left(\sum_{\sigma\in S_n}\mu(\sigma)\rho(\sigma)\right) = \text{tr}(\hat{\mu}(\rho)), 
\end{equation}  
so that
\begin{equation}
E_{\mu}((\chi_{\varrho})^r)=\sum_{\lambda\vdash n} a_{\lambda, r} \text{tr}(\hat{\mu}(S^{\lambda})),
\end{equation}
where $\hat{\mu}$ is the Fourier transform of $\mu$ and 
\begin{equation}\label{EqAlready}
\text{tr}(\widehat{\mu_{k+1}}(S^{\lambda})) = \chi^{\lambda}_{(n)}\left(\frac{\chi^{\lambda}_{(2, 1^{n-2})}}{\dim S^{\lambda}}\right)^k.
\end{equation}

\begin{proposition}\label{ThLimitn}
Fix any $c>0$.  As $n$ approaches infinity, the distribution of the number of fixed points after one $n$-cycle and $cn$ transpositions converges to $\mathcal{P}(1-e^{-2c})$.
\end{proposition}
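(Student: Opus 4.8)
The plan is to prove this by the method of moments. The number of fixed points after one $n$-cycle and $cn$ transpositions is the random variable $\chi_\varrho$ under $\mu_{cn+1}$, and the target law $\mathcal{P}(1-e^{-2c})$ is determined by its moments (it has an everywhere-finite moment generating function $e^{(1-e^{-2c})(e^t-1)}$), so it suffices to show that for every fixed $r\ge 1$,
\begin{equation}
\lim_{n\to\infty} E_{\mu_{cn+1}}\big((\chi_\varrho)^r\big) = \sum_{i=0}^{r}\stirling{r}{i}(1-e^{-2c})^i,
\end{equation}
the right-hand side being the $r$-th moment of $\mathcal{P}(1-e^{-2c})$ (the Touchard/Bell polynomial evaluated at $1-e^{-2c}$). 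First I would combine the identity $E_{\mu}((\chi_\varrho)^r)=\sum_{\lambda\vdash n}a_{\lambda,r}\,\mathrm{tr}(\hat\mu(S^\lambda))$ from the excerpt with (\ref{EqAlready}) to write
\begin{equation}
E_{\mu_{cn+1}}\big((\chi_\varrho)^r\big) = \sum_{\lambda\vdash n} a_{\lambda,r}\,\chi^{\lambda}_{(n)}\left(\frac{\chi^{\lambda}_{(2,1^{n-2})}}{\dim S^\lambda}\right)^{cn},
\end{equation}
so the entire problem is to analyze this sum as $n\to\infty$.

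Next I would invoke the same reduction used in the proof of Proposition \ref{ThUBn}: by Murnaghan--Nakayama, $\chi^{\lambda}_{(n)}=0$ unless $\lambda$ is a hook $(n-j,1^j)$, in which case $\chi^{\lambda}_{(n)}=(-1)^j$ and, by (\ref{PrUBnNC}) (which is also valid at $j=0$, where it gives the trivial representation with normalized character $1$), $\frac{\chi^{\lambda}_{(2,1^{n-2})}}{\dim S^\lambda}=\frac{n-1-2j}{n-1}$. For a hook one has $\bar\lambda=(1^j)$, so $\abs{\bar\lambda}=j$ and $f^{\bar\lambda}=1$, and Proposition \ref{PrIrreps} gives $a_{(n-j,1^j),r}=\sum_{i=j}^{r}\binom{i}{j}\stirling{r}{i}$, which vanishes once $j>r$. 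Hence, for $n$ large enough that $r\le n-1$ (so the hypothesis $r\le n-\lambda_2$ of Proposition \ref{PrIrreps} holds for every surviving hook), the moment is a \emph{finite} sum with $n$-independent index set,
\begin{equation}
E_{\mu_{cn+1}}\big((\chi_\varrho)^r\big) = \sum_{j=0}^{r} (-1)^j\left(\sum_{i=j}^{r}\binom{i}{j}\stirling{r}{i}\right)\left(\frac{n-1-2j}{n-1}\right)^{cn}.
\end{equation}

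Then I would let $n\to\infty$ term by term. By the elementary computation behind (\ref{EqUBnLh}), $\big(\tfrac{n-1-2j}{n-1}\big)^{cn}\to e^{-2cj}$, and since the sum is finite and independent of $n$ there is no interchange-of-limits issue. Reordering the (finite) double sum and applying the binomial theorem gives
\begin{equation}
\lim_{n\to\infty} E_{\mu_{cn+1}}\big((\chi_\varrho)^r\big) = \sum_{i=0}^{r}\stirling{r}{i}\sum_{j=0}^{i}\binom{i}{j}(-e^{-2c})^j = \sum_{i=0}^{r}\stirling{r}{i}(1-e^{-2c})^i,
\end{equation}
which is the desired moment, and convergence of all moments to those of the moment-determinate law $\mathcal{P}(1-e^{-2c})$ yields convergence in distribution. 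This argument is mostly bookkeeping once Propositions \ref{PrIrreps} and \ref{ThUBn} are available; the only point needing care is the range restriction $r\le n-\lambda_2$ in Proposition \ref{PrIrreps}, but it is harmless in the limit precisely because for fixed $r$ only the hooks with $j\le r$ contribute, and for those $\lambda_2\le 1$, so the restriction is satisfied for all large $n$. If anything is a genuine obstacle it is recognizing at the outset that $\chi^\lambda_{(n)}$ collapses the Fourier sum to hooks and that the hook multiplicities conspire, via the binomial identity above, to turn the Bell-polynomial moments of $\mathcal{P}(1)$ into those of $\mathcal{P}(1-e^{-2c})$.
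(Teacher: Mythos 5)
Your proposal is correct and follows essentially the same route as the paper: reduce the Fourier expansion of $E_{\mu_{cn+1}}((\chi_\varrho)^r)$ to hook shapes via Murnaghan--Nakayama, plug in the multiplicities from Proposition \ref{PrIrreps} and the normalized character asymptotics $\left(\frac{n-1-2j}{n-1}\right)^{cn}\to e^{-2cj}$, collapse the double sum with the binomial theorem to get the Touchard polynomial $\sum_i \stirling{r}{i}(1-e^{-2c})^i$, and conclude by the method of moments. Your observation that for fixed $r$ only hooks with $j\le r$ contribute (making the sum finite and the limit interchange trivial) is a slightly cleaner bookkeeping of the same computation the paper carries out with a sum up to $\abs{\bar{\lambda}}=n-2$.
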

\begin{proof}
One can deduce from the moment-generating function that the $r$-th moment of $\mathcal{P}(\nu)$ is $\sum_{i=1}^r  \stirling{r}{i}{\nu}^i$.  It is a standard result that $\widehat{\mu_{cn+1}}(S^{(n)})=1$, and we will ignore the alternating representation because it suffices to consider the first $n-2$ moments, in which the alternating representation does not appear.  For the non-trivial and non-alternating representations, we take advantage of previous computations and synthesize (\ref{PrUBnNC}), (\ref{EqUBnLh}) with $n$ instead of $2n$, and (\ref{EqAlready}) to obtain
\begin{equation}\label{EqSim}
\widehat{\mu_{cn+1}}(S^{\lambda}) \sim \begin{cases}
		(-1)^{\abs{\bar{\lambda}}}e^{-2c\abs{\bar{\lambda}}} \hspace{0.05in} &\lambda \in \Lambda_n\\
		0 &\text{otherwise}.
   \end{cases}
\end{equation}

By Proposition \ref{PrIrreps} (second line below) and (\ref{EqSim}) (fourth line), for $1 \le r \le n-2$, 
\begin{equation}
\begin{split}
E_{\mu_{cn+1}}((\chi_{\varrho})^r) &= a_{(n), r} + \sum_{\lambda \in \Lambda_n} a_{\lambda, r} \widehat{\mu_{cn+1}}(S^{\lambda}) \\ &= \sum_{i=1}^r \stirling{r}{i} + \sum_{\abs{\bar{\lambda}}=1}^{n-2} \sum_{i=\abs{\bar{\lambda}}}^r \stirling{r}{i}\binom{i}{\abs{\bar{\lambda}}}\widehat{\mu_{cn+1}}(S^{\lambda}) \\ &= \sum_{i=1}^r \stirling{r}{i} + \sum_{i=1}^r\sum_{\abs{\bar{\lambda}}=1}^{i} \stirling{r}{i}\binom{i}{\abs{\bar{\lambda}}}\widehat{\mu_{cn+1}}(S^{\lambda}) \\ &\sim \sum_{i=1}^r \stirling{r}{i} + \sum_{i=1}^r\sum_{\abs{\bar{\lambda}}=1}^{i}  \stirling{r}{i}\binom{i}{\abs{\bar{\lambda}}} (-e^{-2c})^{\abs{\bar{\lambda}}} \\ &=\sum_{i=1}^r \stirling{r}{i}\left(1+\sum_{\abs{\bar{\lambda}}=1}^{i} \binom{i}{\abs{\bar{\lambda}}} (-e^{-2c})^{\abs{\bar{\lambda}}}\right) \\ &=\sum_{i=1}^r \stirling{r}{i}(1-e^{-2c})^i.
\end{split}
\end{equation}
This shows that the first $n-2$ moments of $\chi_{\varrho}$ with respect to $\mu_{cn+1}$ approach those of $\mathcal{P}(1-e^{-2c})$, and convergence follows from the method of moments.
\end{proof}
\begin{corollary}\label{CoLBn}
For any $c>0$, after one $n$-cycle and $cn$ transpositions, 
\begin{equation}
\|\mu_{cn+1} - U_{cn+1}\|_{\emph{TV}} \ge \frac{e^{-2c}}{e} - o(1)
\end{equation}
as $n$ goes to infinity.
\end{corollary}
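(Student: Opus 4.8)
The plan is to use the elementary lower bound $\|\mu_{cn+1} - U_{cn+1}\|_{\text{TV}} \ge \abs{\mu_{cn+1}(A) - U_{cn+1}(A)}$, valid for every event $A$, with $A$ taken to be the set of fixed-point-free permutations --- the ``core of the argument'' that was unavailable for the coupon-collector approach but is now recovered through moments. (If one prefers, $A$ may be intersected with whichever of $A_n$ or $S_n \backslash A_n$ supports $\mu_{cn+1}$ and $U_{cn+1}$; this changes nothing, since both measures vanish off that coset.) Proposition \ref{ThLimitn} shows that the number of fixed points under $\mu_{cn+1}$ converges in distribution to $\mathcal{P}(1-e^{-2c})$, and since both the pre-limiting laws and the limit are supported on the nonnegative integers, this is equivalent to pointwise convergence of probability mass functions; in particular $\mu_{cn+1}(A) \to e^{-(1-e^{-2c})}$. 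The fact recalled just before Proposition \ref{ThLimitn}, proved as Theorem 4.3.3 of \cite{Din14}, says that the number of fixed points under $U_{cn+1}$ is asymptotically $\mathcal{P}(1)$, whence $U_{cn+1}(A) \to e^{-1}$.

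Combining these two limits,
\begin{equation}
\|\mu_{cn+1} - U_{cn+1}\|_{\text{TV}} \ge \abs{\mu_{cn+1}(A) - U_{cn+1}(A)} = e^{-(1-e^{-2c})} - e^{-1} + o(1) = \frac{e^{e^{-2c}}-1}{e} + o(1).
\end{equation}
The final step is the elementary inequality $e^t \ge 1+t$ applied with $t = e^{-2c}$, which gives $\dfrac{e^{e^{-2c}}-1}{e} \ge \dfrac{e^{-2c}}{e}$ and hence the stated bound $\|\mu_{cn+1} - U_{cn+1}\|_{\text{TV}} \ge \dfrac{e^{-2c}}{e} - o(1)$. (In fact the argument produces the slightly sharper constant $\tfrac{1}{e}(e^{e^{-2c}}-1)$; we record only $\tfrac{e^{-2c}}{e}$ to match the clean form of Theorem \ref{ThMain}.)

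The only point that warrants a moment's care is the transfer from moments to the atom $\mu_{cn+1}(A)$: Proposition \ref{ThLimitn} already delivers convergence in distribution via the method of moments, and for integer-valued laws this coincides with convergence of each atom, so there is no genuine obstacle. Everything else is bookkeeping --- chiefly tracking the parity that determines which coset $\mu_{cn+1}$ lives on --- and I expect the write-up to be short.
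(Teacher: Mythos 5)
Your proposal is correct and follows essentially the same route as the paper: the same test set $A$ of fixed-point-free permutations, the same two limits $\mu_{cn+1}(A)\to e^{e^{-2c}-1}$ (from Proposition \ref{ThLimitn}) and $U_{cn+1}(A)\to e^{-1}$, and the same final estimate, which the paper obtains by truncating the series $\frac{1}{e}\bigl(e^{-2c}+\frac{(e^{-2c})^2}{2!}+\cdots\bigr)$ where you invoke $e^t\ge 1+t$. Your added remark that convergence in distribution of integer-valued laws gives convergence of the atom at $0$ is a correct justification of the step the paper leaves implicit.
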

\begin{proof}
Let $A$ be the set of fixed-point-free permutations.  Then
\begin{equation}
\begin{split}
\|\mu_{cn+1} - U_{cn+1}\|_{\text{TV}} &\ge \abs{\mu_{cn+1}(A)-U_{cn+1}(A)} \\ &\sim e^{e^{-2c}-1} - \frac{1}{e} = \frac{1}{e}\left(e^{-2c}+\frac{(e^{-2c})^2}{2!} + \cdots \right) \ge \frac{e^{-2c}}{e},
\end{split}
\end{equation}
as was to be shown.
\end{proof}

Together with Proposition \ref{ThLimitn}, Corollary \ref{CoLBn} completes the proof of Theorem \ref{ThMain}.


\begin{thebibliography}{99}

\bibitem{AS65} M. Abramowitz and I. A. Stegun, eds., \emph{Handbook of Mathematical Functions with Formulas, Graphs, and Mathematical Tables}, Dover, New York, 1965.

\bibitem{Dia88} P. Diaconis, \emph{Group Representations in Probability and Statistics}, IMS Lecture Notes Monogr. Ser. 11, Inst. Math. Statist., Hayward, CA, 1988. 

\bibitem{DS81} P. Diaconis and M. Shahshahani, \emph{Generating a random permutation with random transpositions}, Z. Wahrsch. verw. Geb. \textbf{57} (1981), no. 2, 159-179.

\bibitem{Din14} S. Ding, \emph{A Random Walk in Representations}, Ph.D. Thesis, University of Pennsylvania, 2014.

\bibitem{Fro00} F. G. Frobenius, \emph{\"Uber die Charaktere der symmetrischen Gruppen}, Sitz. Konig. Preuss. Akad. Wissen. (1900), 516-534.

\bibitem{Ful10} J. Fulman, \emph{Separation cutoffs for random walk on irreducible representations}, Ann. Comb. \textbf{14} (2010), no. 3, 319-337.

\bibitem{FH91} W. Fulton and J. Harris, \emph{Representation Theory: A First Course}, GTM 129, Springer-Verlag, New York, 1991.

\bibitem{GC06} A. Goupil and C. Chauve, \emph{Combinatorial operators for Kronecker powers of representations of $S_n$}, S\'eminaire Lotharingien de Combinatoire, \textbf{54} (2006), B54j.

\bibitem{Jam78} G. D. James, \emph{The Representation Theory of the Symmetric Groups}, LNM 682, Springer-Verlag, Berlin, 1978.

\bibitem{S-CZ08} L. Saloff-Coste and J. Z\'u\~niga, \emph{Refined estimates for some basic random walks on the symmetric and alternating groups}, ALEA Lat. Am. J. Probab. Math. Stat. \textbf{4} (2008), 359-392.

\bibitem{Sta97} R. P. Stanley, \emph{Enumerative Combinatorics, Vol. I}, Wadsworth, Monterey, CA, 1986, Cambridge Stud. Adv. Math. 49, reprinted by Cambridge Univ. Press, Cambridge, 1997.

\end{thebibliography}
\end{document}